\documentclass[a4paper,11pt]{article}
\usepackage{amssymb,latexsym,amsmath,amscd,amsthm}
\usepackage{comment}

\newtheorem{thm}{Theorem}[section]
\newtheorem{lema}[thm]{Lemma}

\newtheorem{prop}[thm]{Proposition}

\theoremstyle{definition}

\newtheorem{ex}[thm]{Example}

\newtheorem{claim}[thm]{Claim}

\newtheorem{rk}[thm]{Remark}

\newcommand{\ZZ}{\mathbb{Z}}
\newcommand{\CC}{\mathbb{C}}

\newcommand{\RR}{\mathbb{R}}

\newcommand{\dd}{\mathrm{diag}}
\newcommand{\EE}{\delta}
\newcommand{\poly}{\mathcal{P}}
\newcommand{\var}{\mathcal{V}}
\newcommand{\etalchar}[1]{$^{#1}$}

\newcommand{\Arg}{\mathrm{Arg}}
\newcommand{\Log}{\mathrm{Log}}
\newcommand{\Ll}[2]{\mathrm{Log}_{#1}(M)}%{\mathrm{Log}(#1;#2)}

\usepackage{appendix}
\usepackage{color}

\definecolor{pink}{rgb}{1,0,1}

\definecolor{garnet}{RGB}{210,15,30}

\textwidth = 15truecm
%\textheight = 20.5truecm
\hoffset = -1.3truecm

\title{An open set of $4\times 4$ embeddable matrices whose principal logarithm is not a Markov generator}
\author{Marta Casanellas, Jes\'us Fern\'andez-S\'anchez, Jordi Roca-Lacostena}

\begin{document}

\maketitle

\begin{abstract}
%Several results seem to point out that the embeddability of a Markov process may be determined by checking whether the principal logarithm of its transition matrix is a rate matrix or not. In this note, we provide a constructive method to produce a positive measure subspace of Markov matrices for which this is not true.

A Markov matrix is embeddable if it can represent a homogeneous continuous-time Markov process. It is well known that if a Markov matrix has real and pairwise-different eigenvalues, then the embeddability can be determined by checking whether its principal logarithm is a rate matrix or not. The same holds for Markov matrices close enough to the identity matrix or that rule a Markov process subjected to certain restrictions. In this paper we prove that this criterion cannot be generalized and we provide open sets of  Markov matrices that are embeddable and whose principal logarithm is a not a rate matrix.
\end{abstract}

\emph{Keywords}: Markov matrix; Markov generator; embedding problem; rate identifiability

\section{Introduction}

%The embedding problem for Markov matrices consists on deciding whether a probability matrix can be written as the exponential of  a rate matrix or not. This is equivalent to decide if the substitution process ruled by such probability matrix can be modeled as an homogeneous continuous-time process or not.
%Despite the embedding problem is solved for $2\times 2$ and $3 \times 3$ matrices, it remains open for bigger matrices.  Nonetheless, several results seem to indicate that the embeddability of a Markov process may be decided by checking whether the principal logarithm of the corresponding transition matrix is a rate matrix or not (see \cite{ChenJia}, \cite{Israel} or \cite{JJ} for some examples). \\

The \emph{embedding problem} \cite{Elfving} for Markov matrices consists on deciding whether a Markov (or stochastic) matrix can be written as the exponential of a \emph{rate matrix} (that is, a real matrix whose rows sum to 0 and has non-negative off-diagonal entries). This is equivalent to deciding whether the substitution process ruled by such Markov matrix can be modeled as a homogeneous continuous-time process or not. We say  that a Markov matrix $M$ is \emph{embeddable} if there exists rate matrix $Q$ such that $M=e^Q$.

Although the embedding problem is about eighty years old, it was only solved completely for $2\times 2$ and $3 \times 3$ matrices until very recently \cite{Kingman,Cuthbert73,Johansen,Carette}. A recent manuscript of the authors fully solves it for $4\times4$ matrices and gives a solution for generic matrices of any size \cite{CFR4x4}. Several partial results had been proved on the way (see \cite{Runnenburg,Kingman,Culver,Cuthbert72, Israel} for example).

One of the most relevant and well known results is that the embeddability of a matrix with \emph{real and different eigenvalues} can be determined by checking whether the principal logarithm is a rate matrix. Indeed, this is a consequence of a result in \cite{Culver} which states that a Markov matrix with pairwise different real eigenvalues has only one real logarithm, its principal logarithm. Moreover, the principal logarithm also characterizes the embeddability of Markov matrices that are ``close'' to the identity matrix or have a large enough determinant (\cite{Cuthbert73}, \cite{Israel}, \cite{CFR4x4}) or transition matrices of certain particular random processes (see \cite{ChenJia}, \cite{JJ}, for example). As a consequence,  the principal logarithm has been used in different settings as a tool to detect whether a Markov process may have a homogeneous continuous-time realization or not (see \cite{verbyla, Geweke}). However, as pointed out above, a priori this characterization could only be used under certain conditions.

The following question arises naturally: 
\emph{Is the embeddability of a generic Markov matrix determined by the principal logarithm?} In this paper, we give a negative answer to this question. We provide non-empty \emph{open} sets of Markov matrices that are embeddable and whose principal logarithm is not a rate matrix.
More precisely, we deform particular examples in order to generate  non-empty open sets of $4\times 4$ Markov matrices (with different eigenvalues) that are embeddable and have a unique Markov generator which, moreover, is different from the principal logarithm (see Theorem 3.3 for the precise statement).

%The existence of these sets might affect studies where the fact that the principal logarithm of a Markov matrix is not a rate matrix is used to claim non-embeddability and to decide that the observed substitution process does not fit homogeneous continuous-time Markov models (see \cite{verbyla, Geweke}).

 The organization of this paper is as follows. In section \ref{sec:Preliminaries} we start by properly defining the embedding problem and present some known results regarding matrix logarithms. In section \ref{sec:Obert} we first provide examples of embeddable matrices with non-negative eigenvalues whose principal logarithm is not a rate matrix and use these examples to obtain open subsets in the space of $4\times4$ Markov matrices containing embeddable matrices with the same property. Finally, in Section \ref{sec:GenerateExamples} we give an insight on how these examples were obtained.

\section{Preliminaries} \label{sec:Preliminaries}
In this section we introduce the notation an background needed for the sequel.

We say that a real square matrix $M$ is a \emph{Markov matrix} if its entries are non-negative and all its rows sum to 1. A real square matrix $Q$ is a \emph{rate matrix} if its off-diagonal entries are non-negative and its rows sum to 0. It is known that $Q$ is a rate matrix if and only if $M(t)=e^{Qt}$ is a Markov matrix for all $t\geq 0$. In this case, we say that $M=e^Q$ is \emph{embeddable} (since it can be embedded into a homogeneous continuous-time process) and we say that $Q$ is a \emph{Markov generator} for $M$. The \emph{embedding problem} consists on deciding whether a given Markov matrix is embeddable or not \cite{Elfving}.
Note that the space of $n\times n$ rate matrices and the space of $n\times n$ Markov matrices have the same dimension ($n^2-n$) and, as the exponential map is a local diffeomorphism from rate matrices to embeddable matrices,  the space of embeddable Markov matrices has the same dimension, too.

We use the notation $\log_k(\lambda)$ to denote the $k$-th determination of the logarithm of $\lambda \in \CC$, that is, $\log_k(\lambda)=\log|\lambda|+ (\text{Arg}(\lambda) + 2\pi k) i$ where $\text{Arg}(\lambda) \in (-\pi, \pi]$ is the \emph{principal argument} of $\lambda$. The principal logarithm of $\lambda$,  $\log_0(\lambda)$, will be denoted as $\log(\lambda)$ for ease of reading. We say that a matrix $Q$ is a \emph{logarithm} of a matrix $M$ if $e^Q=M$. From the exponential series of a matrix, $e^Q = \sum_{n\geq 0} \frac{Q^n}{n!}$, we have that, if $v$ is an eigenvector with eigenvalue $\lambda$ of $Q$, then $v$ is an eigenvector with eigenvalue $e^\lambda$ of $e^Q$.
 It is well known that if $M$ has no negative or null eigenvalues then there is a unique logarithm of $M$ whose eigenvalues are the principal logarithm of the eigenvalues of $M$ \cite{Higham}. In this case, we refer to this logarithm as the \emph{principal logarithm of} $M$ and denote it by $\Log(M)$.

 Along this paper we work with $4\times 4$ diagonalizable Markov matrices $M$ with an eigendecomposition $M= P\; \dd(1,\lambda, \mu ,\overline{\mu}) \; P^{-1}$ with $\lambda \in (0,1)$ and $\mu \in \CC\setminus \RR$ such that $\Arg(\mu)>0$. According to \cite{Culver}, all real logarithms with rows summing to zero of such matrices are of the form $\Log_k(M)$ where
\begin{equation}\label{eq:LogKDef}\
\Log_k(M):= P \; \dd(0,\log(\lambda), \log_k(\mu) ,\overline{\log_k(\mu)}) \; P^{-1}
\end{equation}
does not depend on $P$.

The  results in this paper arise from transition matrices of a nucleotide substitution model known as the \emph{Strand Symmetric Model} \cite{SSM}. The matrices in the model are of the following form:
\begin{equation}\label{eq:SSM}
\begin{small}
	\begin{pmatrix}
		a & b & c & d \\
		e & f & g & h \\
		h & g & f & e \\
		d & c & b & a \\
	\end{pmatrix}.
\end{small}
\end{equation}
We will refer to \emph{real} matrices with this structure as \emph{SS matrices}. A straightforward computation shows that the product and sum of SS matrices is closed within the model. Hence, the exponential of a rate SS matrix is a Markov SS matrix. Moreover, according to Theorem 6.7 by \cite{Anya} all the SS matrices can be transformed into a 2-block-diagonal matrix via the same change of basis  and hence, if $M$ is an SS matrix with a conjugated pair of non-real eigenvalues then  $\Log_k(M)$  is also an SS matrix for all $k \in \ZZ$.

\section{Main result}\label{sec:Obert}

% In this section we show that, for any $k\in \ZZ$, there is a non-zero measure set of embeddable $4 \times 4$ Markov matrices whose unique Markov generator is its $\Log_k$ (see Theorem \ref{thm:OpenSet}). This is proved by deforming the particular examples below.

In this section we show that, for any $k\in \ZZ$, there is a non-zero measure set of embeddable $4 \times 4$ Markov matrices whose unique Markov generator is its $\Log_k$ (see Theorem \ref{thm:OpenSet}). This is proved by deforming the particular examples below.

\begin{ex}\label{ex:CounterexK} We distinguish two cases depending on the sign of $l$.

\begin{itemize}
		\item If $l \geq 0$, consider the matrix $M=P_+ \; D_+ \; P_+^{-1}$ where
				$$P_+:=\begin{small}
				\begin{pmatrix}
					1 & 6l+2  & 1-i  & 1+i \\
					1 & -2l-1 & -i  & i\\
					1 & -2l-1 &  i  & -i\\
					1 & 6l+2  &  -1+i &  -1-i \\
				\end{pmatrix}
				\end{small}  \quad \textrm{ and } \quad D_+:= \dd (	1 , e^{(1-8l)\pi} , e^{-2\pi(1+2l)} i , -e^{-2\pi(1+2l)} i). $$

			 \noindent A straightforward computation shows that $M$ is a Markov matrix. Further computations show that, for any $k\in \ZZ$, $$\Ll{k}{M}= \frac{\pi}{4}\begin{small}
				\begin{pmatrix}
					-9-20 l-4 k & 6+12 l+8 k & 2+12 l-8 k & 1-4 l+4 k\\
					1+4 l-4 k & -5-12 l+4 k & 1+4 l-4 k & 3+4 l+4 k\\
					3+4 l+4 k & 1+4 l-4 k & -5-12 l+4 k & 1+4 l-4 k\\
					1-4 l+4 k & 2+12 l-8 k & 6+12 l+8 k  & -9-20 l-4 k\\
				\end{pmatrix}
				\end{small}.$$

			Since $k,l\in \ZZ$ and $l\geq 0$, the only possible choice for $k$ so that off-diagonal entries are non-negative is $k=l$ (this can be easily seen by looking at the entries $(2,1)$ and $(4,1)$ for instance). In this case we have: $$\Ll{l}{M}= \frac{\pi}{4}\begin{small}
				\begin{pmatrix}
					-9-24 l & 6+20 l & 2+4 l & 1\\
					1 & -5-8 l & 1 & 3+8 l\\
					3+8 l & 1 & -5-8 l & 1\\
					1 & 2+4 l & 6+20 l & -9-24 l\\
				\end{pmatrix}
				\end{small}.$$
			In particular $M$ is embeddable and $\Log_{l}(M)$ is its unique Markov generator.\\

		\item If $l < 0$, consider the matrix $M=P_- \; D_- \; P_-^{-1}$ where
				$$P_-:=\begin{small}
					\begin{pmatrix}
						1 & 6l+1  & 1-i  & 1+i \\
						1 & 2l & -i  & i\\
						1 & 2l &  i  & -i\\
						1 & 6l+1  &  -1+i &  -1-i \\
				\end{pmatrix}
				\end{small}, \quad \textrm{ and } \quad  D_-:= \dd (	1 , e^{(1+8l)\pi} , e^{4l\pi} i , -e^{4l\pi} i). $$

			As above, $M$ is a Markov matrix and, for any $k\in \ZZ$, $$\Ll{k}{M}= \frac{\pi}{4}\begin{small}
				\begin{pmatrix}
					3+20 l+4 k & -12 l+8 k & -4-12 l-8 k & 1+4 l-4 k\\
					-1-4 l-4 k & -1+12 l-4 k & 1-4 l+4 k & 1-4 l+4 k\\
					1-4 l+4 k & 1-4 l+4 k & -1+12 l-4 k &-1-4 l-4 k \\
					1+4 l-4 k & -4-12 l-8 k & -12 l+8 k & 3+20 l+4 k\\
				\end{pmatrix}
				\end{small}.$$

			Since $k,l\in \ZZ$ and $l<0$, the only possible choice for $k$ that produces only non-negative off-diagonal entries is $k=l$. In this case we have
$$\Ll{l}{M}= \frac{\pi}{4}\begin{small}
				\begin{pmatrix}
					3+24 l  & -4l  & -4-20 l & 1\\
					-1-8 l & -1+8 l & 1 & 1\\
					1 & 1 & -1+8 l & -1-8 l\\
					1 & -4-20 l & -4 l & 3+24 l\\
				\end{pmatrix}
				\end{small}.$$
			In particular, $M$ is embeddable and $\Ll{l}{M}$ is its unique Markov generator.			
	\end{itemize}
\end{ex}

Note that for $l\neq 0$, the matrices described in the previous example are embeddable matrices whose principal logarithm is not a rate matrix. Up to our knowledge, these are the first examples of generic embeddable matrices (that is with different eigenvalues) satisfying this property (see also Remark \ref{rk:PreviousExamples}). %and hence its determinant is necessarily close to $0$  \cite{Israel}, \cite{Cuthbert72}, \cite{Cuthbert73}, \cite{K2}.

\begin{ex}\label{ex:l=-1}

Rounding to the 10-th decimal and taking $l=-1$ the matrix $M$ in the previous example is:
\begin{equation}\label{eq:ExMatrix}
M=\begin{small}
\begin{pmatrix}
	0.1428588867 & 0.3571393697 & 0.3571463443 & 0.1428553993\\
	0.1428588866 & 0.3571411134 & 0.3571446008 & 0.1428553992\\
	0.1428553992 & 0.3571446008 & 0.3571411134 & 0.1428588866\\
	0.1428553993 & 0.3571463443 & 0.3571393697 & 0.1428588867\\
\end{pmatrix}
\end{small}.
\end{equation}

\noindent Further computations show that
\begin{center}
	$
	\begin{small}\Log(M)= \frac{\pi}{4}
	 \begin{pmatrix}
		-17 & 12 & 8 & -3\\
		3 & -13 & 5 & 5	\\
		5 & 5 & -13 & 3 \\
		-3 & 8 & 12 & -17\\		
	\end{pmatrix}
	\quad \textrm{ and }\quad \Ll{-1}{M} = \frac{\pi}{4}	
	\begin{pmatrix}
		-21	& 4 & 16 & 1\\
		7 & -9 & 1 & 1\\
		1 & 1 & -9 & 7 \\
		1 & 16 & 4 & -21\\	
	\end{pmatrix}
	\end{small}
	$.
\end{center}

\noindent  Thus, $M$ is an embeddable matrix whose principal logarithm is not a rate matrix.
%which clearly are rate matrices.
\end{ex}

The reader may note that the matrix  $M$ above is actually a SS matrix. Indeed, this happens for all the matrices $M$ in Example \ref{ex:CounterexK}. Next theorem proves that one can perturb the entries of those matrices in order to obtain embeddable matrices  with no symmetry constrains whose only Markov generator is still $\Log_l$.

\begin{thm}\label{thm:OpenSet}
For any $l\in \ZZ$, there is a non-empty Euclidean open set of embeddable Markov matrices  whose unique Markov generator is $\Ll{l}{M}$.
 In particular, there is a non-empty Euclidean open set of $4\times 4$  Markov matrices that are embeddable and whose principal logarithm is not a rate matrix.
\end{thm}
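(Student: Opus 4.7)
The plan is to start from the specific embeddable matrix $M_l$ constructed in Example~\ref{ex:CounterexK} and argue by continuity that the relevant properties persist under small Euclidean perturbations of $M_l$ in the space of $4\times 4$ Markov matrices.

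First, I would observe that the eigenvalues of $M_l$ are pairwise distinct: the Perron eigenvalue $1$, a real eigenvalue in $(0,1)$, and a non-real complex-conjugate pair. Standard perturbation theory then provides a Euclidean neighborhood $U$ of $M_l$ such that each $M \in U$ admits a continuously varying eigendecomposition $M = P_M\,\dd(1,\lambda_M,\mu_M,\overline{\mu_M})\,P_M^{-1}$ with $\mu_M\notin\RR$. By the result of Culver recalled in Section~\ref{sec:Preliminaries}, any real logarithm of such an $M$ with zero row sums---in particular any Markov generator of $M$---is of the form $\Log_k(M)$ defined in~\eqref{eq:LogKDef}, and each $\Log_k(M)$ depends continuously on $M\in U$. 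The explicit formulas in Example~\ref{ex:CounterexK} show that every off-diagonal entry of $\Log_l(M_l)$ is \emph{strictly} positive; by continuity, shrinking $U$ if necessary, the same holds for $\Log_l(M)$, so every $M\in U$ is embeddable with $\Log_l(M)$ a Markov generator.

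The technical core is to show that no other candidate $\Log_k(M)$ with $k\neq l$ is a rate matrix on $U$---a priori a problem about infinitely many matrices. What makes it tractable is the identity
\begin{equation*}
\Log_k(M) = \Log_l(M) + 2\pi(k-l)\,N_M, \qquad N_M := i\,P_M\,\dd(0,0,1,-1)\,P_M^{-1},
\end{equation*}
where $N_M$ is a real matrix depending continuously on $M\in U$. Each off-diagonal entry of $\Log_k(M)$ is thus an affine function of $k$ with continuous coefficients. Inspecting the formulas in Example~\ref{ex:CounterexK}, I would pick two off-diagonal positions at which $(N_{M_l})_{ij}$ has strictly opposite non-zero signs and for which the two inequalities $[\Log_k(M_l)]_{ij}\geq 0$ already force $k-l\in[-1/4,1/4]$; since $k\in\ZZ$, the only solution is $k=l$. (For $l\geq 0$ one can take the entries $(2,1)$ and $(4,1)$; for $l<0$ one can take $(1,4)$ and $(2,3)$.)

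Finally, by continuity of $N_M$ and $\Log_l(M)$, I would shrink $U$ once more so that both bounds on $k-l$ coming from the chosen positions stay strictly inside $(-1/2,1/2)$ for every $M\in U$; this preserves $k=l$ as the unique admissible integer. Hence $\Log_l(M)$ is the unique Markov generator of each $M\in U$, proving the first statement. The ``in particular'' assertion then follows by taking any $l\neq 0$: since $\Log_l(M)$ is the unique generator, the principal logarithm $\Log(M)=\Log_0(M)\neq\Log_l(M)$ fails to be a rate matrix throughout $U$. The main obstacle is precisely the infinitely many candidate values of $k$, and the affine structure of $\Log_k(M)$ in $k$---read off from the identity above---is what reduces the problem to two linear inequalities with continuously varying coefficients.
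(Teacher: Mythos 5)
Your proposal is correct, but it reaches the openness part of the statement by a genuinely different route than the paper. The paper does not perturb $M$ directly: it builds an explicit $12$-parameter family $M_\delta=P_\delta D_\delta P_\delta^{-1}$ by perturbing the eigenvectors and eigenvalues of the example, pulls back the open conditions (off-diagonal entries of $\Log_l$ positive, and some off-diagonal entry of $\Log_{l\pm1}$ negative) through this parametrization, and then must still prove that the resulting set is open \emph{in the space $\mathcal{M}_{\mathbf 1}$ of matrices with unit row sums}; this is done via an injectivity claim for $\delta\mapsto M_\delta$ (off a measure-zero set $\{\delta_6+\delta_9=0\}$) together with the invariance of domain theorem. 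You avoid all of that by working directly in a Euclidean neighbourhood of $M_l$ inside the Markov matrices and using that, on the open set of matrices with simple eigenvalues and a non-real conjugate pair off the cut $(-\infty,0]$, the maps $M\mapsto \Log_k(M)$ (equivalently $\Log(M)$ and $N_M=i(P_{\mu_M}-P_{\overline{\mu_M}})$) are continuous; this is the one point you should justify explicitly, e.g.\ via the Cauchy-integral representation of the spectral projections, and it is the same continuity fact the paper itself invokes for its maps $g,h,j$ on $f(X)$. For uniqueness of the generator both arguments rest on the same ingredient, the affine dependence of the entries of $\Log_k(M)$ on $k$: the paper checks that $\Log_{l-1}(M)$ and $\Log_{l+1}(M)$ each have a negative off-diagonal entry and extrapolates, while you bound $k-l$ directly using two entries with opposite-sign slopes (your entry choices and the resulting bound $|k-l|\le 1/4$ at $M_l$ match the paper's Example 3.1); these are equivalent. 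Net effect: your argument is shorter and dispenses with the injectivity claim and invariance of domain, at the price of invoking continuity of the eigendecomposition/spectral projections, whereas the paper's parametrized family makes the perturbation completely explicit.
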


\begin{proof}
%For $k=0$ is enough to take any open set of embeddable matrices close to the identity. For $k\neq 0$
%We use a similar argument as in the proof of Theorem \ref{thm:openset}.
Let us define the matrix $$R=\begin{small}
   \begin{pmatrix}
   1 & 0 & 0 & 0\\
   0 & 1 & 0 & 0\\
   0 & 0 & 1 & 1\\
   0 & 0 & i & -i\\
   \end{pmatrix}
   \end{small}
   .$$

Given $\EE = (\delta_1,\dots,\delta_{12}) \in \RR^{12}$ consider the matrix
$$
   A_\EE=\begin{small}
   \begin{pmatrix}
   1 & \delta_1 & \delta_4 & \delta_7\\
   0 & 1 & \delta_5 & \delta_8\\
   0 & \delta_2 & 1 &\delta_9 \\
   0 & \delta_3 & \delta_6 & 1\\
   \end{pmatrix}
   \end{small}.$$

Now, taking $P_+$ and $P_-$ as in Example \ref{ex:CounterexK} we define the matrices $S$ and $D_\EE$ depending on the sign of $l$ as follows:\
\begin{itemize}

	\item If $l\geq 0$ take:
	   $S= P_+ \; R^{-1} =\begin{small}
	   \begin{pmatrix}
		   1 & 6l+2 & 1 & -1\\
		   1 & -2l-1 & 0 & -1\\
		   1 & -2l+1 & 0 & 1\\
		   1 & 6l+2 & -1 & 1\\
	   \end{pmatrix}\end{small}$,

   		$D_\EE= \dd \big(  \ 1 \ , \   (1+\delta_{10}) e^{(1-8l)\pi} \ , \ \delta_{11} + i(1+\delta_{12})e^{-2\pi(1+2l)}  \ , \ \delta_{11} - i(1+\delta_{12})e^{-2\pi(1+2l)}  \ \big)$.

	\item If $l< 0$ take:
		$S= P_- \; R^{-1} = \begin{small}
	   	\begin{pmatrix}
			1 & 6l+1 & 1 & -1\\
		   	1 & -2l & 0 & -1\\
		   	1 & -2l & 0 & 1\\
   			1 & 6l+1 & -1 & 1\\
		\end{pmatrix} \end{small}$,

		$D_\EE= \dd \big(  \ 1 \ , \   (1+\delta_{10}) e^{(1+8l)\pi} \ , \ \delta_{11} + i(1+\delta_{12})e^{-4l\pi}  \ , \ \delta_{11} - i(1+\delta_{12})e^{-4l\pi}  \ \big)$.

\end{itemize}

Set $\kappa\in (0,1)$ small enough so that the matrix $A_\EE$ is invertible if $|\delta_i|<\kappa$, $i=1,\ldots,9$. For such $\delta_i$, $i=1,\ldots,9$, and $\delta_{10},\delta_{11},\delta_{12}\in \RR$  we can define $M_\EE := P_\EE  \; D_\EE \; P_\EE ^{-1}$  where  $P_\EE := S \; A_\EE \; R $.  Note that  $M_0$ is the Markov matrix $M$ in  Example \ref{ex:CounterexK}. In particular, $\Ll{l}{M_0}$ is a rate matrix (and hence $M_0$ is embeddable) while, for $k\neq l$, $\Log_k(M_0)$ is not.

By construction, the first column of $P_{\EE}$ is an eigenvector of $M_\EE$ of eigenvalue 1. A simple computation shows that it is the vector $(1,1,1,1)$ and hence the rows of $M_\EE$ sum to one. If we take $\EE$ in
\begin{eqnarray*}
%X:= \big\lbrace (\delta_1,\dots,\delta_{12}): |\delta_i|<\kappa \text{ for } i=1,\dots,9 \text{ and }  \delta_j\in (-1,\infty) \text{ for } j=10,11,12 \big\rbrace  \subseteq \RR^{12}.
X:= \big\lbrace \delta=(\delta_1,\dots,\delta_{12}): |\delta_i|<\kappa \text{ for } i=1,\dots,12 \big\rbrace  \subseteq \RR^{12},
\end{eqnarray*}
the second eigenvalue $(1+\delta_{10}) e^{-7\pi}$ of $M_{\EE}$ is positive. On the other hand, the third and fourth eigenvalues and eigenvectors are a  conjugated pair. Since $S$ and $A_\EE$ are real matrices and the third and fourth column-vector of $R$ are a conjugated pair of vectors we deduce that $M_\EE$ is a real matrix whose rows sum to one. By making $\kappa$ smaller if necessary, we can assume that for any $\EE\in X$
the matrix  $M_\delta$ is Markov and non-singular.

 Now, let $\mathcal{M}_{\mathbf{1}}$ be the set of $4\times4$ real matrices  with rows summing to one. Additionally to the map $f:X\rightarrow \mathcal{M}_\mathbf{1},$ $f(\delta)=M_\delta$, let us define the maps $$g,\, h,\, j : f(X)  \longrightarrow M_4(\RR)$$ by
$$ g (M):= \Ll{l-1}{M}, \quad
h (M):= \Ll{l}{M}, \quad
j (M):=  \Ll{l+1}{M}.$$
% \small
% \begin{tabular}{r c l}
% $g_k:$&$f(X)$ & $\longrightarrow M_4(\RR)$\\
%  &  $M$ & $ \longmapsto \Ll{k-1}{M}$\\
% \end{tabular}  \hspace*{5mm} \\
%  \begin{tabular}{r c l}
% $h_k:$&$f(X)$ & $ \longrightarrow M_4(\RR)$\\
%  &  $M$&$ \longmapsto \Ll{k}{M}$\\
%  \end{tabular}  \hspace*{5mm} \\
%  \begin{tabular}{r c l}
% $j_k:$&$f(X)$ & $ \longrightarrow M_4(\RR)$\\
%  &  $M$&$ \longmapsto \Ll{k+1}{M}$\\
% \end{tabular}
% \normalsize

We write $U\subset M_4(\RR)$ for the open set of matrices with non-zero entries and at least one negative entry outside the diagonal, and we write  $V\subset M_4(\RR)$ for the open set of matrices with non-zero entries and positive off-diagonal entries. As claimed earlier, a straightforward computation shows that $f(0)$ is indeed the matrix $M$ in  Example \ref{ex:CounterexK} and hence $g (f(0)), j (f(0))\in U$ and $h (f(0))\in V$.

Note that $f$, $g $, $h $ and $j $ are continuous on  their respective domains, so $g ^{-1}(U)$, $h ^{-1}(V)$ and $j ^{-1}(U)$ are open sets in $f(X)$ containing $f(0)$. Therefore, $W:= g ^{-1}(U) \cap h ^{-1}(V) \cap j ^{-1}(U)  \subseteq f(X)$ is a non-empty open set in $f(X)$  (it contains $f(0)$). Moreover, for any $M\in W$ we have $M \in h ^{-1}(V)$. Since $exp\circ h = id$, this implies that $M$ is the exponential of a matrix in $V$, that is, $M$ is the exponential of a rate matrix and hence it is a Markov matrix whose $\Log_l$ is a Markov generator. Furthermore,   $\Ll{l-1}{M}$ and $\Ll{l+1}{M}$ are not  rate matrices, because $M$ is included $g^{-1}(U)$ and $M\in j^{-1}(U)$ respectively. As the entries of $\Log_k(M)$ depend linearly on $k$, we get that any matrix in $W$ is an embeddable Markov matrix whose only Markov generator is $\Ll{l}{M}$.

  To conclude the proof we  check that this set contains a non-empty open subset of $\mathcal{M}_{\mathbf{1}}$.
%\deleted[id=js]{To do so let us consider the set $Y=\{\EE\in X\mid \delta_6+\delta_9=0\}$. We claim the following:}
\begin{claim} \label{cl:fInjective}\
Consider the set $Y=\{\EE\in X\mid \delta_6+\delta_9=0\}$. Then,  $f$ is injective in $X\setminus Y$.
\end{claim}

From the claim we have that $f_{|X\setminus Y}:X\setminus Y \rightarrow \mathcal{M}_{\mathbf{1}}$ is injective and hence $f(X\setminus Y)=f(X)\setminus f(Y)$. Moreover, as $X\setminus Y$ is open in $\RR^{12}\simeq \mathcal{M}_{\mathbf{1}}$ and $f$ is continuous on its whole domain we infer, by the invariance of domain theorem, that  $f_{|X\setminus Y}$ is a homeomorphism between $(X\setminus Y)$ and its image, and hence $f(X\setminus Y)$ is an open set of $\mathcal{M}_{\mathbf{1}}$ . To conclude, it is enough to show that the open set $f(X\setminus Y)\cap W$ is not empty. Since $f(Y)$ is an (open set of an) affine algebraic variety of dimension $\leq 11$, the interior of $f(Y)$ is empty and we deduce that $f(0)$ is adherent to $f(X\setminus Y)=f(X)\setminus f(Y)$. In particular, $f(X\setminus Y)$ cuts the neighbourhood $W$ of $f(0)$, and this finishes the proof.
\end{proof}

\begin{proof}[Proof of Claim \ref{cl:fInjective}] \
Since the eigenvalues of any $M_{\delta}$ are all simple, the values of $\delta_{10}, \delta_{11}, \delta_{12}$ are completely determined by $M_{\delta}$.
It remains to see that $M_{\delta}$ also determines the other values of $\delta_i$, $i=1,\ldots,9$ as long as $\delta\in X \setminus Y$.
Let $c_i$ denote the $i$-th column of $A_\EE$.
%
%As $M_\EE := P_\EE  \; D_\EE \; P_\EE ^{-1}$,
As $P_\EE =  S \; A_\EE \; R$, we have that  the following are eigenvectors of $M_\EE$:
\begin{itemize}
 \item $v_1 = (1,1,1,1)^t$, with eigenvalue $\lambda_1=1$.
 \item $v_2 = S c_2$, with positive eigenvalue $\lambda_2=
 \begin{cases}
 (1+\delta_{10}) e^{(1-8l)\pi} & \text{if } l\geq 0,\\
(1+\delta_{10}) e^{(1+8l)\pi}  & \text{if } l< 0.\\
\end{cases}$
 \item $v_3 = S  (c_3+ i \ c_4 )$, with complex eigenvalue with positive imaginary part $$\lambda_3= \begin{cases}
  \delta_{11} + i(1+\delta_{12})e^{-2\pi(1+2l)}  & \text{if } l\geq 0,\\
\delta_{11} + i(1+\delta_{12})e^{-4l\pi}   & \text{if } l< 0.\\
\end{cases}$$
 \item $ v_4 = S  (c_3- i\  c_4) $, with complex eigenvalue with negative imaginary part $$\lambda_4=\overline{\lambda_3}=\begin{cases}
  \delta_{11} - i(1+\delta_{12})e^{-2\pi(1+2l)}  & \text{if } l\geq 0,\\
\delta_{11} - i(1+\delta_{12})e^{-4l\pi}   & \text{if } l< 0.\\
\end{cases}$$
\end{itemize}

Let  us assume that there are $\EE,\widetilde{\EE}\in X\setminus Y$ so that $M_\EE = M_{\widetilde{\EE}}\,$ and write $\widetilde{v_1},\ \widetilde{v_2},\ \widetilde{v_3},\ \widetilde{v_4}$ for the corresponding eigenvectors of $M_{\widetilde{\delta}}$.  The aim is to show that $\delta = \widetilde{\delta}$.
Using again that the eigenvalues are simple, we have that there are $z_2,z_3,z_4 \in \CC$ such that $v_i = z_i \; \widetilde{v_i}$, $i=2,3,4$.

\begin{itemize}
\item[-] From $v_2 = z_2 \; \widetilde{v_2}$, we have that  $S c_2 =   z_2  S  \widetilde{c_2} = S \left(  z_2  \; \widetilde{c_2} \right)$ and hence we get $ c_2 =  z_2  \; \widetilde{c_2}$. From the second component of $c_2$ and $\widetilde{c_2}$, we deduce that $z_2 = 1$. Hence, $ c_2 =  \widetilde{c_2} $ which implies that $\delta_i = \widetilde{\delta_i}$ for $i=1,2,3$.

\item [-]From $v_3 = z_3 \; \widetilde{v_3}$, we deduce that  $S\;(c_3+ i \ c_4 ) =  z_3 \; S \; (\widetilde{c_3}+ i \ \widetilde{c_4} ) = S \;\left(  z_3  \; (\widetilde{c_3}+ i \ \widetilde{c_4} )  \right)$, and hence $ c_3 + i \ c_4 = z_3  \; (\widetilde{c_3}+ i \ \widetilde{c_4})$. Write  $z_3 = a + b i$ with $a,b\in \RR$. By looking at the third and fourth components,  we obtain that $a = 1 +  b \widetilde{\delta_9}$ and $ a = 1 -  b  \widetilde{\delta_6}$. Since, $(\widetilde{\delta_9}+\widetilde{\delta_6})\neq 0$ this implies that $b=0$ and $a=1$, so $z_3=1$. We derive that $c_3=\widetilde{c_3}$ and $c_4=\widetilde{c_4}$ which implies that $\delta_i = \widetilde{\delta_i}$ for $i=4, \dots ,9$.
\end{itemize}
\end{proof}

\begin{rk}\label{rk:PreviousExamples}
\rm
Examples of {embeddable} Markov matrices for which the principal logarithm is not a rate matrix were already shown in \cite{JJ,K2}. However, the principal logarithm is not clearly defined for the transition matrices in those examples as they have a repeated negative eigenvalue. Moreover, one can consider  the Markov generators given for those examples as directional limits of the principal logarithm of a matrix with a conjugated pair of non-real eigenvalues when the principal argument of this pair tends to $ \pm \pi$.  Indeed, those generators can be written as $$P \; \dd(0,\log(\lambda), \log|\mu|+\pi i ,\log|\mu|-\pi i) \; P^{-1}$$ where $\lambda>0$, $\mu<0$ are eigenvalues of the Markov matrix $M$, and $P$ is a certain matrix that diagonalizes $M$.  The main consequence of this fact is that those examples cannot be extended to obtain an open set of $4\times 4$ Markov matrices whose principal logarithm is not a rate matrix.
\end{rk}

	The results in this section exhibit that for every $l\in \ZZ$ it is possible to construct an embeddable $4\times 4$ Markov matrix $M$  with only one Markov generator precisely given by $\Ll{l}{M}$.  It follows from Theorem 4 by \cite{Cuthbert73} that, excluding the cases $l=0$ and $l=-1$, there is no analogous construction for $3\times3$ Markov matrices.

\section{Constructing the examples} \label{sec:GenerateExamples}

In this section we recover the results in \cite{Birkhauser} in order to show how we obtained the examples in the previous section. To do so, we used SS matrices with a conjugated pair of non real eigenvalues and positive determinant. As claimed in the section \ref{sec:Preliminaries}, any real logarithm of such a matrix is also a SS matrix and is of the form $\Log_k(M)$ defined in (\ref{eq:LogKDef}). We start by providing a parametrization of such logarithms. To this end,  let us consider the algebraic variety
\begin{equation}\label{eq:Variety}
\var = \{(v_1,\dots,v_6)\in \RR^6 \mid v_4^2 - v_5 v_6 =-1/4\}
\end{equation}

\noindent and for any given $v=(v_1,\dots,v_6)\in \RR^6$ and $\theta \in \RR$ define the matrix
\begin{equation}\label{eq:Q}
Q(\theta ,v):=\begin{footnotesize}
	\begin{pmatrix}
		v_1+v_2-v_3-\theta v_4 &  -v_1-v_2+\theta v_5 	& -v_1-v_2-\theta v_5 	& v_1+v_2+v_3+\theta v_4 \\
        -v_1+v_2-\theta v_6 	 & v_1-v_2-v_3+\theta v_4 & v_1-v_2+v_3-\theta v_4  & -v_1+v_2+\theta v_6\\
        -v_1+v_2+\theta v_6 	 & v_1-v_2+v_3-\theta v_4 & v_1-v_2-v_3+\theta v_4  & -v_1+v_2-\theta v_6 \\
        v_1+v_2+v_3+\theta v_4 & -v_1-v_2-\theta v_5  	& -v_1-v_2+\theta v_5     & v_1+v_2-v_3-\theta v_4\\
	\end{pmatrix}.
\end{footnotesize}
\end{equation}

\noindent

%Since the set of SS Markov matrices with a conjugated complex eigenvalue has positive measure, we use the parametrization of it real logarithms $Q(\theta,v)$ to shows that there is a positive measure set of embeddable SS matrices whose principal logarithm is not a rate matrix.

\begin{prop}\label{prop:QisLog}
Given $\theta \in (-\pi,\pi)$ and $v \in \mathcal{V}$ it holds that:
\begin{itemize}

\item[i)] $M:=e^{Q(\theta,v)}$ is a SS matrix with rows summing to 1.

\item[ii)] $e^{Q(\theta+ 2\pi k,v)}=M$ for all $k\in \ZZ$.

\item[iii)] If $\theta \neq 0$, then $M$ has two non-real conjugated pair of eigenvalues and $\Log_k(M)=Q(\theta+ 2\pi k,v)$.

\end{itemize}

\end{prop}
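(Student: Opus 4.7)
The plan is to exploit the $(1\leftrightarrow 4,\,2\leftrightarrow 3)$ symmetry intrinsic to the SS structure. Let $J$ be the $4\times 4$ permutation matrix realising this swap; a real matrix is SS precisely when it commutes with $J$, so one can block-diagonalize $Q(\theta,v)$ along the $\pm 1$-eigenspaces of $J$. Concretely, the $+1$-eigenspace is spanned by $v_+=(1,1,1,1)^t$ and $w_+=(1,-1,-1,1)^t$, while the $-1$-eigenspace is spanned by $v_-=(1,1,-1,-1)^t$ and $w_-=(1,-1,1,-1)^t$. This basis does not depend on $\theta$ or $v$, so both blocks will carry a uniform algebraic description.

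Part (i) then follows almost immediately. The set of SS matrices with zero row-sums is closed under sum and product, hence under exponentiation, so $e^{Q(\theta,v)}$ is SS. Since $Q(\theta,v)\cdot(1,1,1,1)^t=0$ by direct inspection, the vector $(1,1,1,1)^t$ is fixed by $e^{Q(\theta,v)}$, and the rows of $e^{Q(\theta,v)}$ sum to $1$.

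The substance of the proof lies in the explicit form of the two blocks. I would carry out the change-of-basis computation: on the $+1$-eigenspace, in the basis $(v_+,w_+)$, the matrix of $Q(\theta,v)$ is $\left(\begin{smallmatrix}0 & 4v_2\\ 0 & 4v_1\end{smallmatrix}\right)$, independent of $\theta$, with eigenvalues $0$ and $4v_1$. On the $-1$-eigenspace, in the basis $(v_-,w_-)$, it takes the form $B(\theta):=-2v_3\,I+\theta\,C$ with
$$C=\begin{pmatrix} v_5-v_6 & -2v_4-v_5-v_6 \\ -2v_4+v_5+v_6 & -(v_5-v_6) \end{pmatrix}.$$
A short computation gives $\trace(C)=0$ and $\det(C)=4(v_5v_6-v_4^2)$, so by Cayley--Hamilton $C^2=-\det(C)\,I$; the defining equation $v_4^2-v_5v_6=-1/4$ of $\mathcal{V}$ forces $\det(C)=1$, hence $C^2=-I$. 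Recognising that $\mathcal{V}$ is precisely engineered so that the antiinvariant block squares to a multiple of $-I$ is the main obstacle; everything else is symmetry, Cayley--Hamilton, or bookkeeping.

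Once $C^2=-I$, one has $e^{\theta C}=\cos\theta\,I+\sin\theta\,C$, so the restriction of $e^{Q(\theta,v)}$ to the $-1$-eigenspace equals $e^{-2v_3}(\cos\theta\,I+\sin\theta\,C)$, manifestly $2\pi$-periodic in $\theta$; combined with the $\theta$-independence of the $+1$-block this gives (ii). For (iii), the eigenvalues of $Q(\theta+2\pi k,v)$ are $\{0,\,4v_1,\,-2v_3\pm i(\theta+2\pi k)\}$, so those of $M$ are $1$, $e^{4v_1}$ and $e^{-2v_3\pm i\theta}$, a non-real conjugate pair whenever $\theta\neq 0$. For $\theta\in(-\pi,\pi)\setminus\{0\}$ the principal argument of $e^{-2v_3+i\theta}$ is $\theta$, hence $\log_k(e^{-2v_3+i\theta})=-2v_3+i(\theta+2\pi k)$. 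Because the eigenvectors of $C$ (corresponding to eigenvalues $\pm i$) and of the $+1$-block are independent of $k$, the diagonalising matrix appearing in \eqref{eq:LogKDef} is common to all the $Q(\theta+2\pi k,v)$; matching eigenvalues then yields $\Log_k(M)=Q(\theta+2\pi k,v)$, completing (iii).
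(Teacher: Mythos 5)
Your proof is correct, and for part (i) it is the same argument as the paper's (closure of SS matrices under sums and products, plus the zero row sums of $Q(\theta,v)$ making $(1,1,1,1)^t$ a fixed vector of the exponential). For (ii) and (iii) you take a genuinely different, more explicit route. The paper never block-diagonalizes: it writes $Q(\theta+2\pi k,v)=Q(\theta,v)+Q(2\pi k,w)$ with $w=(0,0,0,v_4,v_5,v_6)$, asserts that the two summands commute, checks directly that $e^{Q(2\pi k,w)}=\mathrm{Id}$ because $w\in\var$, and for (iii) simply states the spectrum of $Q(\theta+2\pi k,v)$ and matches principal arguments. You instead make explicit the two-block structure of SS matrices (which the paper only invokes abstractly via Theorem 6.7 of \cite{Anya}), compute the two $2\times 2$ blocks correctly, and observe that the defining equation of $\var$ forces $C^2=-I$; your identity $e^{\theta C}=\cos\theta\, I+\sin\theta\, C$ is exactly what underlies the paper's computation $e^{Q(2\pi k,w)}=\mathrm{Id}$, so the two proofs are computationally equivalent but organized differently. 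Your version buys self-containedness: periodicity and the spectrum drop out of one block computation, it isolates the role of the variety $\var$, and it does not need the paper's commutation step, which is justified there only by the summands being SS matrices (by itself insufficient, since SS matrices do not commute in general; the specific pair does commute because their restrictions to the $(-1)$-eigenspace of $J$ are $-2v_3 I+\theta C$ and $2\pi k\,C$ with the same $C$). The paper's version is shorter and stays entirely inside the parametrization $Q(\cdot,\cdot)$ without any change of basis. One caveat you share with the paper: the definition (\ref{eq:LogKDef}) applies $\log_k$ to the eigenvalue $\mu$ with $\Arg(\mu)>0$, so the matching $\Log_k(M)=Q(\theta+2\pi k,v)$ as you (and the paper) phrase it really presumes $\theta>0$; for $\theta<0$ the eigenvector of $\mu$ is the one on which $C$ acts by $-i$, and the same matching gives $Q(\theta+2\pi k,v)=\Log_{-k}(M)$. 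This does not affect later use of the proposition (there $\theta=\pi/2$), but it deserves a sentence if you keep your formulation.
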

\begin{proof}\

\begin{itemize}
\item[i)] As $Q(\theta,v)$ is a SS matrix, so is $M:=e^{Q(\theta,v)}$. Since the rows of $Q(\theta,v)$ sum to $0$ then $(1,1,1,1)$ is an eigenvector with eigenvalue $0$ of $Q(\theta,v)$ and hence $(1,1,1,1)$ is an eigenvector with eigenvalue $e^0=1$ of $M$, which implies that the rows of $M$ sum to $1$.

\item[ii)]Given $v=(v_1,v_2,v_3,v_4,v_5,v_6)$, let $w$ be the vector $(0,0,0,v_4,v_5,v_6)$. Using (\ref{eq:Q}), it is immediate to check that   $Q(\theta+2\pi k,v)=Q(\theta,v)+ Q(2\pi k,w)$. Since $Q(\theta,v)$ and $Q(2\pi k,w)$ are SS matrices they commute and hence
$$e^{Q(\theta+2\pi k,v)}=e^{Q(\theta,v)}\; e^{Q(2\pi k,w)}.$$
Note that as $v \in \mathcal{V}$ so does $w$. Using that $w=(0,0,0,v_4,v_5,v_6)\in \mathcal{V}$, an immediate computation shows that $e^{Q(2\pi k,w)}=Id$ which concludes this part of the proof.

\item[iii)] By the previous statements, we already know that $Q(\theta+ 2\pi k,v)$ is a logarithm of $M$.
A direct computation shows that the spectrum of $Q(\theta+ 2\pi k,v)$ is
\begin{equation*}
\begin{split}
& \left\lbrace 0, 4v_1, -2v_3+ (\theta+ 2\pi k )\sqrt{4(v_4^2-v_6 v_5)}, -2v_3- (\theta+ 2\pi k) \sqrt{4(v_4^2-v_6 v_5)} \right\rbrace\\
 & = \big\lbrace 0, 4v_1, -2v_3+( \theta+ 2\pi k) \; i, -2v_3- ( \theta+ 2\pi k) \; i
 \big\rbrace.
 \end{split}
\end{equation*}
where the last equality is obtained by using that $v \in \mathcal{V}$. As the eigenvalues of $M$ are the exponential of these eigenvalues and $\theta \neq 0$ this shows that $M$ has a conjugated pair of non-real eigenvalues with principal argument $\pm \theta$. Hence, we have that $\Log_k(M)=Q(\theta+ 2\pi k,v)$.
\end{itemize}
\end{proof}

\begin{rk} \label{rk:VisDetermined}\
\rm  Conversely, given a SS Markov matrix $M$ with eigenvalues $1,\lambda,\mu,\overline{\mu}$, with $\lambda\in (0,1)$ and $\mu \in \CC\setminus \RR$ such that $\Arg(\mu)>0$, there is  $v\in \mathcal{V}$ such that $\Log_k(M) = Q(\Arg(\mu)+2\pi k,v)$ for all $k\in \ZZ$. Moreover, the vector $v$ is uniquely determined by the entries of $M$. The proof of this claim is not included here because it is quite technical and the result is not relevant for the goal of this section.
\end{rk}

We denote by $\poly(\theta)$ the set of those $v \in \RR^6$ such that $Q(\theta,v)$ is a rate matrix and by $\poly(\theta)^c$ its complementary. Note that $\poly(\theta)$  is an unbounded convex polyhedral cone because the entries of $Q(\theta,v)$ are linear expressions on the components of $v$, and hence if $Q(\theta,v)$ is a rate matrix so is $Q(\theta,\lambda v)$ for any $\lambda \geq 0$.

\begin{lema}\label{lema}
Given $\theta \in (-\pi,\pi)$ and $k\in \ZZ$, $k\neq0$,  it holds that
 $\poly(\theta)^c \cap \poly(\theta+ 2\pi k)$ has two connected components $\mathcal{C}^{(k)}_1$ and $\mathcal{C}^{(k)}_2$, where $\mathcal{C}^{(k)}_1$ is the set of solutions to the following inequalities:  \begin{equation}\label{eq:C1}
	\begin{matrix}
			v_1+v_2+v_3+\theta v_4 <0,\\
			v_1+v_2+v_3+(\theta + 2\pi k) v_4 \geq0,\\
			v_1-v_2+v_3-(\theta + 2\pi k) v_4 \geq0,\\

		\end{matrix} \qquad  \qquad \begin{matrix}

			-v_1-v_2+(\theta + 2\pi k) v_5 \geq0,\\
			-v_1-v_2-(\theta + 2\pi k) v_5 \geq0,\\
			-v_1+v_2+(\theta + 2\pi k) v_6 \geq0,\\
			-v_1+v_2-(\theta + 2\pi k) v_6 \geq0.\\
		\end{matrix}
 \end{equation}
Moreover,  $(v_1,v_2,v_3,v_4,v_5,v_6) \in \mathcal{C}^{(k)}_1$ if and only if $(v_1,-v_2,v_3,-v_4,v_6,v_5) \in \mathcal{C}^{(k)}_2$.
\end{lema}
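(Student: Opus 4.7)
The plan is to read off the six inequalities defining $\poly(\theta)$ from the off-diagonal entries of $Q(\theta,v)$ in (\ref{eq:Q}). By the SS-symmetry, only six distinct linear inequalities appear, and they split naturally into three pairs: the $(v_3,v_4)$-pair coming from the $(1,4)$ and $(2,3)$ positions (precisely the analogues of (a) and (b) in (\ref{eq:C1})), the $v_5$-pair from the $(1,2)$ and $(1,3)$ positions, and the $v_6$-pair from the $(2,1)$ and $(2,4)$ positions. The same six inequalities for $\poly(\theta+2\pi k)$ arise by replacing $\theta$ with $\theta+2\pi k$.

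The key arithmetic input, which I expect to be the principal technical ingredient, is $|\theta|<\pi\le|\theta+2\pi k|$ for every $k\in\ZZ\setminus\{0\}$ and $\theta\in(-\pi,\pi)$. The $v_5$-pair of $\poly(\theta)$ is equivalent to $v_1+v_2\le -|\theta|\,|v_5|$, while the $\poly(\theta+2\pi k)$-analogue is $v_1+v_2\le -|\theta+2\pi k|\,|v_5|$; the latter is strictly stronger, so assuming $v\in\poly(\theta+2\pi k)$ automatically forces the first. The same argument handles the $v_6$-pair. Consequently, for any $v\in\poly(\theta+2\pi k)$, the only inequalities of $\poly(\theta)$ that can possibly fail are the analogues of (a) and (b).

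Next I would rule out that (a) and (b) fail simultaneously: adding their strict failures yields $v_1+v_3<0$, whereas adding the corresponding inequalities for $\poly(\theta+2\pi k)$ (which hold by hypothesis) gives $v_1+v_3\ge 0$, a contradiction. Hence $\poly(\theta)^c\cap\poly(\theta+2\pi k)$ decomposes as the disjoint union of the set $\mathcal{C}_1^{(k)}$ where (a) fails and the set $\mathcal{C}_2^{(k)}$ where (b) fails, and $\mathcal{C}_1^{(k)}$ is exactly cut out by the seven inequalities in (\ref{eq:C1}). Each component is the intersection of six closed half-spaces with one open half-space, hence convex and therefore connected. Non-emptiness of $\mathcal{C}_1^{(k)}$ is witnessed by the explicit matrices in Section \ref{sec:Obert}, and non-emptiness of $\mathcal{C}_2^{(k)}$ then follows from the claimed involution.

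Finally, for the involution, I would substitute $(v_1,-v_2,v_3,-v_4,v_6,v_5)$ into each of the seven defining inequalities of $\mathcal{C}_1^{(k)}$. A direct computation shows that the $v_5$- and $v_6$-pairs of $\poly(\theta+2\pi k)$ are exchanged with one another and that the analogues of (a) and (b) get swapped, so the transformed system is precisely the one characterising $\mathcal{C}_2^{(k)}$, giving the stated bijection between the two components.
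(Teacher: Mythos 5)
Your proposal is correct and follows essentially the same route as the paper's proof: read off the six distinct off-diagonal inequalities of $Q(\theta,v)$, use $|\theta|<|\theta+2\pi k|$ to see that membership in $\poly(\theta+2\pi k)$ already forces the $v_5$- and $v_6$-inequalities of $\poly(\theta)$, observe that the $(1,4)$ and $(2,3)$ entries sum to $2(v_1+v_3)\geq 0$ so exactly one of them can be negative (giving the two convex, hence connected, pieces), and finish with the swap $(v_1,-v_2,v_3,-v_4,v_6,v_5)$, exactly as in the paper. The only minor difference is that your non-emptiness witness (the matrices of Section \ref{sec:Obert}) strictly covers only $\theta=\pi/2$, whereas the paper defers non-emptiness to the explicit rays $w_1,w_2,w_3$ constructed after the lemma for arbitrary $\theta$ and $k$; this is a negligible point of bookkeeping rather than a gap in the argument.
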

\begin{proof}
Since the rows of $Q(\theta,v)$ sum to zero,  $\poly(\theta)$ is the convex polyhedral cone arising from the inequation system $Q(\theta,v)_{i,j}\geq 0$ for all pairs $(i,j)$ with $i\neq j$. Moreover, due to the symmetries of SS matrices the set of inequalities with $i\in\{1,2\}$ and $i\in\{3,4\}$ are the same.

For ease of reading we take $L= Q(\theta,v)$ and $R= Q(\theta + 2\pi k,v)$ and denote their entries by $l_{i,j}$ and $r_{i,j}$ respectively. According to (\ref{eq:Q}), we have that  $r_{1,2}+r_{1,3} = l_{1,2}+l_{1,3}= 2(-v_1-v_2)$, $r_{2,1}+r_{2,4}=l_{2,1}+l_{2,4}=2(-v_1+v_2)$ and    $r_{1,4}+r_{2,3}  = l_{1,4}+l_{2,3} = 2 (v_1+ v_3)$. The off-diagonal entries of $R$ are non-negative because it is a rate matrix and hence $(-v_1-v_2), (-v_1+v_2), (v_1+ v_3) \geq 0$. Since $|\theta| < |\theta + 2\pi k|$ we have that $-v_1-v_2 \pm (\theta + 2\pi k) v_5 \geq 0$ implies $-v_1-v_2 \pm \theta v_5  \geq 0$ thus $l_{1,2}, l_{1,3}\geq 0$. Analogously, we can see that $l_{2,1}, l_{2,4}\geq 0$. Since $L$ is not a rate matrix, then $l_{1,4}<0$ or $l_{2,3}<0$ and we know that  $l_{1,4}+l_{2,3} = 2 (v_1+ v_3) \geq 0$ thus either $l_{1,4}\geq 0, l_{2,3}<0$ or  $l_{2,3}\geq 0, l_{1,4}<0$ showing that  $\poly(\theta )^c \cap \poly(\theta + 2\pi k)$ has two connected components.
From the definition of $Q(\theta,v)$ one can immediately check that given $v=(v_1,v_2,v_3,v_4,v_5,v_6)$ such that the only negative off-diagonal entry of $Q$ and $L$ is $l_{1,4}$ then we get that for $(v_1,-v_2,v_3,-v_4,v_6,v_5)$ the only negative off-diagonal entry of $Q$ and $L$ is $l_{2,3}$.
The linear inequalities system in (\ref{eq:C1}) is the reduced system arising from the assumption that the only negative off-diagonal entry of $Q$ and $L$ is $l_{1,4}$.
\end{proof}

If we allow the first expression in (\ref{eq:C1}) to vanish, a straightforward computation shows that, if $k\neq0$, the solution space is the convex hull of 10 different rays including the ones associated with the vectors\begin{center}
$w_1:=(-|\theta + 2\pi k|,0,|\theta + 2\pi k|,0,1,1), \qquad w_2:=(-|\theta + 2\pi k|,0,|\theta + 2\pi k|,0,-1,1),$
$w_3:=(-|\theta + 2\pi k|,-|\theta + 2\pi k|,|\theta + 2\pi k|,sign(k),2,0).$
\end{center}

Among these, $w_3$ satisfies that $Q(\theta ,w_3)_{1,4}< 0$ and hence the interior of the convex hull of the rays associated with $w_1$, $w_2$ and $w_3$ is included in $\mathcal{C}^{(k)}_1$. In particular, for any $v\in \var$ such that $v=\lambda_1 w_1 + \lambda_2 w_2 +\lambda_3 w_3$ with $\lambda_i>0$ we have that $v\in \var\cap \mathcal{C}^{(k)}_1$. An example of such a vector is $u\in \mathcal{V}$ defined as:
\begin{equation}\label{eq:VectorU}\
u:= (-|\theta + 2\pi k|,-|\theta + 2\pi k|/2,|\theta + 2\pi k|,sign(k)/2,1,1/2)= \frac{w_1}{4}+ \frac{w_2}{4}+\frac{w_3}{2} .
\end{equation}

This shows that $\mathcal{C}^{(k)}_1 \cap\mathcal{V} \neq  \emptyset$. Note that given $v\in \mathcal{C}^{(k)}_1 \cap\mathcal{V}$ it follows from Proposition \ref{prop:QisLog} that $M=e^{Q(\theta,v)}$ is a SS matrix with rows summing to $0$. Indeed, as $v\in \poly(\theta+2\pi k)$ we have that $\Log_k(M)=Q(\theta+2\pi k,v)$ is a rate matrix and hence $M$ is an embeddable Markov matrix. Moreover, we have that $\Log_k(M)=Q(\theta,v)$ is not a rate matrix because $v\in \poly^c(\theta)$, thus we have a constructive method to obtain embeddable SS matrices whose principal logarithm is not a rate matrix.\\

However, the vector $u$ in (\ref{eq:VectorU}) lies in the boundary of $\mathcal{C}^{(k)}_1 \cap\mathcal{V}$ which implies that the Markov generators obtained will have zero entries and hence such matrix can not be deformed to obtain an open set as done in Theorem \ref{thm:OpenSet}. For instance, by taking $\theta =\pi/2$, $k=1$ and $v$ as in (\ref{eq:VectorU}), $v=(-5\pi/2,-5\pi/4,5\pi/2,1/2,1,1/2)$, we get

$$
L = \frac{\pi}{4}
\begin{pmatrix}
	-26  & 17  & 13  & -4\\
	4 & -14  & 4 & 6 \\
	6 & 4 & -14  & 4\\
	- 4 & 13   & 17   & -26  \\
\end{pmatrix}
\qquad \text{ and } \qquad
R= \frac{\pi}{4}
 \begin{pmatrix}
	-30  & 25 & 5  & 0\\
	0 & -10 & 0 & 10 \\
	10  & 0 & -10  & 0\\
	0 & 5  & 25  & -30 \\
\end{pmatrix}.
$$\

\noindent The examples in Section \ref{sec:Obert} where obtained by taking $\theta  =\pi/2$ and  the vector
\begin{equation*}\
v= (-|\theta + 2\pi k|,-|\theta + 2\pi k|/2,|\theta + 2\pi k|,sign(k)/2,1,1/2)- (\pi/4,0,-\pi/2,0,0,0) .
\end{equation*}
This vector lies in the interior of $\mathcal{C}^{(k)}_1 \cap \mathcal{V}$ and hence the rate matrices obtained do not have any null entry (see Example \ref{ex:l=-1}).

%Note that Theorem \ref{thm:Goal} claims that $M=exp(R)$ is an embeddable Markov matrix whose principal logarithm $L$ is not a rate matrix for any choice of $\theta  \in (-\pi,\pi]$, $k\in \ZZ\setminus\{0\}$  and $\varepsilon>0$ small enough. Moreover, by modifying $\theta $ and $\varepsilon$ the Markov matrices obtained are an open subset of the Markov SS matrices with a conjugated eigenvalue, and hence the set of matrices obtained has positive measure within SS Markov matrices.

\section*{Acknowledgements}
All authors are partially funded by AGAUR Project 2017 SGR-932 and MINECO/FEDER Projects MTM2015-69135 and MDM-2014-0445. J Roca-Lacostena has received also funding from Secretaria d'Universitats i Recerca de la Generalitat de Catalunya (AGAUR 2018FI\_B\_00947) and European Social Funds.

\bibliographystyle{alpha}
%\bibliography{biblio}

\begin{thebibliography}{CFSRL20b}

\bibitem[Car95]{Carette}
Philippe Carette.
\newblock Characterizations of embeddable 33 stochastic matrices with a
  negative eigenvalue.
\newblock {\em The New York Journal of Mathematics [electronic only]}, 1, 01
  1995.

\bibitem[CFSRL20a]{K2}
Marta Casanellas, Jes{\'u}s Fern{\'a}ndez-S{\'a}nchez, and Jordi
  Roca-Lacostena.
\newblock Embeddability and rate identifiability of {K}imura 2-parameter
  matrices.
\newblock {\em Journal of Mathematical Biology}, 80:995--1019, 2020.

\bibitem[CFSRL20b]{CFR4x4}
Marta Casanellas, Jes{\'u}s Fern{\'a}ndez-S{\'a}nchez, and Jordi
  Roca-Lacostena.
\newblock The embedding problem for {M}arkov matrices.
\newblock {\em arXiv preprint arXiv:submit/3159155}, 2020.

\bibitem[CK13]{Anya}
Marta Casanellas and Anna~M Kedzierska.
\newblock Generating {M}arkov evolutionary matrices for a given branch length.
\newblock {\em Linear Algebra and its Applications}, 438(5):2484--2499, 2013.

\bibitem[CS05]{SSM}
M.~Casanellas and S.~Sullivant.
\newblock The strand symmetric model.
\newblock In {\em Algebraic statistics for computational biology}, pages
  305--321. Cambridge Univ. Press, New York, 2005.

\bibitem[Cul66]{Culver}
Walter~J. Culver.
\newblock On the existence and uniqueness of the real logarithm of a matrix.
\newblock {\em Proc. Amer. Math. Soc}, 17:1146--1151, 1966.

\bibitem[Cut72]{Cuthbert72}
James~R. Cuthbert.
\newblock On uniqueness of the logarithm for {M}arkov semi-groups.
\newblock {\em Journal of the London Mathematical Society}, 2(4):623--630,
  1972.

\bibitem[Cut73]{Cuthbert73}
James~R. Cuthbert.
\newblock The logarithm function for finite-state {M}arkov semi-groups.
\newblock {\em Journal of the London Mathematical Society}, 2(3):524--532,
  1973.

\bibitem[Elf37]{Elfving}
Gustav Elfving.
\newblock Zur theorie der markoffschen ketten.
\newblock {\em Acta Soc. Sci. Finn}, 2, 01 1937.

\bibitem[GMZ86]{Geweke}
John Geweke, Robert~C. Marshall, and Gary~A. Zarkin.
\newblock Mobility indices in continuous time {M}arkov chains.
\newblock {\em Econometrica}, 54(6):1407--1423, 1986.

\bibitem[Hig08]{Higham}
Nicholas~J. Higham.
\newblock {\em Functions of matrices - theory and computation.}
\newblock SIAM, 2008.

\bibitem[IRW01]{Israel}
Robert~B. Israel, Jeffrey~S. Rosenthal, and Jason~Z. Wei.
\newblock Finding generators for {M}arkov chains via empirical transition
  matrices, with applications to credit ratings.
\newblock {\em Mathematical Finance}, 11(2):245--265, 2001.

\bibitem[Jia16]{ChenJia}
Chen Jia.
\newblock A solution to the reversible embedding problem for finite {M}arkov
  chains.
\newblock {\em Statistics \& Probability Letters}, 116:122--130, 2016.

\bibitem[Joh74]{Johansen}
S{\o}ren Johansen.
\newblock {Some Results on the Imbedding Problem for Finite {M}arkov Chains}.
\newblock {\em Journal of the London Mathematical Society}, s2-8(2):345--351,
  07 1974.

\bibitem[Kin62]{Kingman}
John F.~C. Kingman.
\newblock The imbedding problem for finite {M}arkov chains.
\newblock {\em Zeitschrift für Wahrscheinlichkeitstheorie und Verwandte
  Gebiete}, 1(1):14--24, 1962.

\bibitem[RL20]{Birkhauser}
Jordi Roca-Lacostena.
\newblock {\em Generating embeddable matrices whose principal logarithm is not
  a {M}arkov generator}, volume Extended Conference Abstracts - GEOMVAP.
\newblock Birkahauser, 2020.

\bibitem[RLFS18]{JJ}
Jordi Roca-Lacostena and Jes\'us Fern\'andez-S\'anchez.
\newblock Embeddability of {K}imura 3st {M}arkov matrices.
\newblock {\em Journal of Theoretical Biology}, 445:128 -- 135, 2018.

\bibitem[Run62]{Runnenburg}
Johannes~T. Runnenburg.
\newblock On {E}lfving's problem of imbedding a time-discrete {M}arkov chain in
  a time-continuous one for finitely many states.
\newblock {\em Proceedings of the KNAW - Series A, Mathematical Sciences},
  65:536--548, 1962.

\bibitem[VYP{\etalchar{+}}13]{verbyla}
Klara~L. Verbyla, Von~Bing Yap, Anuj Pahwa, Yunli Shao, and Gavin~A. Huttley.
\newblock The embedding problem for {M}arkov models of nucleotide substitution.
\newblock {\em PLoS ONE}, 8:e69187, 7 2013.

\end{thebibliography}

\end{document}